\documentclass[11pt]{article}

\usepackage[a4paper,margin=28mm]{geometry}
\usepackage{amsmath,amssymb,amsthm,mathtools}
\usepackage{microtype}
\usepackage{xcolor}
\usepackage[colorlinks=true,linkcolor=blue!55!black,citecolor=blue!55!black,urlcolor=blue!55!black]{hyperref}
\hypersetup{
  pdftitle={A Uniform Proof for the Small Davenport Constant of the Exponent-p Heisenberg Group},
  pdfauthor={Andreas Volkmann},
  pdfsubject={Small Davenport constants and finite Heisenberg groups}
}

\newtheorem{theorem}{Theorem}[section]
\newtheorem{lemma}[theorem]{Lemma}

\newtheorem{corollary}[theorem]{Corollary}
\theoremstyle{definition}

\theoremstyle{remark}
\newtheorem{remark}[theorem]{Remark}

\newcommand{\Fp}{\mathbb F_p}

\newcommand{\supp}{\operatorname{supp}}

\newcommand{\sunion}{\mathbin{\mathaccent\cdot\cup}}

\title{A Uniform Proof for the Small Davenport Constant\\
of the Exponent-$p$ Heisenberg Group}
\author{Andreas Volkmann}
\date{19 August 2026}

\begin{document}
\maketitle

\begin{abstract}
Let $p$ be an odd prime and let $H_{p^3}=\operatorname{UT}_3(\Fp)$ be the
Heisenberg group of order $p^3$ and exponent $p$.  We prove
\[
  \mathsf d(H_{p^3})=3p-3.
\]
The main ingredient of the proof is an order-value growth theorem.  If $B$ is a
noncollinear zero-sum sequence of $n$ nonzero vectors in $\Fp^2$, then the
alternating areas obtained by ordering $B$ assume at least
$\min(p,n-1)$ distinct values.  Its proof is a short contraction induction:
contract a suitable independent pair, replace the contracted vector in both
orders, and apply Cauchy--Davenport.  A polynomial relative-subsum theorem and
a sharp representation-rigidity lemma then turn this local growth into a
uniform spread bound.  Combined with the standard product-one criterion for
$H_{p^3}$, the spread bound yields the upper bound; the usual sequence
$x^{p-1}y^{p-1}v^{p-1}$ gives the lower bound.
\end{abstract}

\section{Introduction}

For a finite group $G$, a sequence is a finite unordered list of group
elements, with repetitions allowed.  A nonempty subsequence is
\emph{product-one} if its terms can be ordered so that their product is the
identity.  The small Davenport constant $\mathsf d(G)$ is the maximum length
of a product-one-free sequence over $G$.

We write
\[
H_{p^3}=\operatorname{UT}_3(\Fp)
 =\left\{M(a,b,c)=
 \begin{pmatrix}1&a&c\\0&1&b\\0&0&1\end{pmatrix}:a,b,c\in\Fp\right\},
\]
with multiplication
\[
 M(a,b,c)M(a',b',c')=M(a+a',b+b',c+c'+ab').
\]
Godara and Sarkar proved the case $p=3$ and posed the equality
$\mathsf d(H_{p^3})=3p-3$ for every odd prime \cite{GodaraSarkar}.
In recent preprints, White proved the case $p=5$ \cite{White}, and Volkmann
proved the case $p=7$ \cite{Volkmann}.  Both later proofs use the product-one
criterion and spread framework recalled below, followed by $p$-specific finite
verification.

The purpose of this note is to give a uniform proof of the asserted equality.
Two elementary
ingredients do the work.  The first is a polynomial lower bound for subsums
that land on a prescribed line.  The second is a contraction theorem for the
number of ordering defects of a noncollinear zero-sum block.  Once these are
combined, no classification of long zero-sum-free sequences and no finite
search is needed.

\section{Order defects and alternating area}

Let $V=\Fp^2$, and write a vector as $u=(a(u),b(u))$.  Let
\[
  \omega(u,v)=a(u)b(v)-a(v)b(u)
\]
be the standard alternating form.  For an ordering
$I=(u_1,\ldots,u_n)$ define
\[
 q(I)=\sum_{i<j}a(u_i)b(u_j),\qquad
 W(I)=\sum_{i<j}\omega(u_i,u_j).
\]
For an unordered sequence $B$, let
\[
 \Omega(B)=\{q(I): I\text{ is an ordering of }B\},\qquad
 \mathcal W(B)=\{W(I): I\text{ is an ordering of }B\}.
\]

\begin{lemma}[Affine equivalence]\label{lem:affine}
For every sequence $B$ over $V$,
\[
  2q(I)=W(I)+K_B,
  \qquad
  K_B=\Big(\sum_{u\in B}a(u)\Big)
      \Big(\sum_{u\in B}b(u)\Big)-\sum_{u\in B}a(u)b(u),
\]
where $K_B$ is independent of the ordering.  Hence, for odd $p$,
\[
  |\Omega(B)|=|\mathcal W(B)|.
\]
\end{lemma}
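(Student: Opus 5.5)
We prove the identity $2q(I)=W(I)+K_B$ by a direct computation that splits the full double sum over ordered pairs according to the relative position of the two indices. Fix an ordering $I=(u_1,\ldots,u_n)$ and write $a_i=a(u_i)$, $b_i=b(u_i)$. Expanding the product of the two total sums gives
\[
  \Big(\sum_i a_i\Big)\Big(\sum_j b_j\Big)
  =\sum_{i<j}a_ib_j+\sum_{i>j}a_ib_j+\sum_i a_ib_i .
\]
The first piece is $q(I)$ by definition. After renaming indices, the second piece is $\sum_{i<j}a_jb_i$. Hence
\[
  K_B=\sum_{i<j}\bigl(a_ib_j+a_jb_i\bigr).
\]
This is symmetric in the pairs and depends only on the multiset $B$, not on $I$.

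Next we expand the alternating area:
\[
  W(I)=\sum_{i<j}\bigl(a_ib_j-a_jb_i\bigr).
\]
Adding the two expressions term by term, the $a_jb_i$ contributions cancel. What remains is $\sum_{i<j}2a_ib_j=2q(I)$, which is the claimed identity. There is no real obstacle in this step. The only care needed is the reindexing of the $i>j$ part, and the remark that $K_B$ is manifestly ordering-independent because it is written purely in terms of the multiset $B$.

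For the cardinality statement we use that $p$ is odd, so $2$ is invertible in $\Fp$. The map $x\mapsto (x+K_B)/2$ is then a bijection of $\Fp$. By the identity it sends $W(I)$ to $q(I)$ for every ordering $I$, so it carries $\mathcal W(B)$ onto $\Omega(B)$. Since a bijection preserves the size of any subset, $|\Omega(B)|=|\mathcal W(B)|$.
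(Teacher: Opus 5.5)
Your proof is correct and is essentially the paper's argument. Your reindexed term $\sum_{i<j}a_jb_i$ is the paper's $q^{\mathrm{op}}(I)$, so you are checking $q+q^{\mathrm{op}}=K_B$ and $q-q^{\mathrm{op}}=W$ and adding, then using that $2$ is invertible in $\Fp$, exactly as the paper does.
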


\begin{proof}
Let $q^{\mathrm{op}}(I)=\sum_{i<j}a(u_j)b(u_i)$.  Then
$q+q^{\mathrm{op}}=K_B$ and $q-q^{\mathrm{op}}=W$.  Adding these identities
gives the formula.  Multiplication by $2$ is bijective in $\Fp$.
\end{proof}

We call a vector sequence \emph{collinear} if all its terms lie on a
one-dimensional subspace, and \emph{mixed} otherwise.

\section{The order-value growth theorem}

The following selection observation is the geometric core of the proof.

\begin{lemma}[Independent contraction]\label{lem:selection}
Let $B$ be a mixed zero-sum sequence over $V\setminus\{0\}$ with
$|B|\ge4$.  There are independent occurrences $s,y\in B$ such that
$B\setminus\{s,y\}$ is mixed.  In particular, $s+y\ne0$, and
\[
  B'=(B\setminus\{s,y\})\sunion\{s+y\}
\]
is again a mixed zero-sum sequence over $V\setminus\{0\}$.
\end{lemma}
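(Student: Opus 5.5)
We argue by considering the lines through the origin that occur in $B$. Let $L_1,\dots,L_k$ be the distinct one-dimensional subspaces containing terms of $B$. Since $B$ is mixed, $k\ge2$. The task is to remove two independent occurrences, say $s\in L_i$ and $y\in L_j$ with $i\ne j$, while leaving terms on at least two different lines.

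\emph{Case $k\ge3$.} Pick any $s\in L_1$ and $y\in L_2$. The remainder still contains a term of $L_3$. We need it to contain a term off $L_3$ as well. If some $L_i$ has multiplicity at least $2$, we choose $s,y$ so that this line keeps one term; then the remainder contains terms of two distinct lines. Otherwise every line carries exactly one term, so $k=|B|\ge4$, and after removing $s,y$ at least two lines remain represented. Either way $B\setminus\{s,y\}$ is mixed.

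\emph{Case $k=2$.} Write $B=B_1\sunion B_2$ with $B_i$ supported on $L_i$. Because $L_1\oplus L_2=V$ and $\sigma(B)=0$, we get $\sigma(B_1)=\sigma(B_2)=0$. A zero-sum sequence of nonzero vectors on a single line cannot have length $1$, so $|B_i|\ge2$ for both $i$. Now take $s\in B_1$ and $y\in B_2$. The remainder keeps at least one term from each line, so it is mixed. This zero-sum projection step is the one point that needs an actual argument, namely that each line carries at least two terms; it is also where nonzeroness of the terms is used.

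\emph{Conclusions.} Since $s,y$ are independent, $s+y\ne0$. The sequence $B'$ has sum $\sigma(B)=0$, and all its terms are nonzero. It is mixed because it contains the mixed subsequence $B\setminus\{s,y\}$. The hypothesis $|B|\ge4$ is used only in the case $k\ge3$, in the subcase where all multiplicities are $1$, to ensure two lines survive; note that $|B|=3$ with three distinct lines would fail there.
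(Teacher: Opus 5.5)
Your proof is correct, but it is organized differently from the paper's. The paper does no case analysis on the number of directions. It starts from an arbitrary independent pair $a,b$. If $R=B\setminus\{a,b\}$ happens to be collinear on a line $L$, it uses the zero-sum condition to get $a+b=-\sum R\in L$, hence $a,b\notin L$ by independence. It then exchanges $b$ for any $c\in R$. The complement of $\{a,c\}$ contains $b\notin L$ together with a term of $L$, or is the independent pair $\{b,d\}$ when $|B|=4$.

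Your split by the number $k$ of lines meeting $B$ instead localizes the hypotheses. For $k\ge3$ the statement is pure counting, and the zero-sum condition is not needed at all. For $k=2$ the zero-sum condition enters through $V=L_1\oplus L_2$: it forces the part of $B$ on each line to be zero-sum, hence of length at least two. The paper's exchange argument is uniform, and in fact lets $s$ be any prescribed occurrence. Yours makes it transparent exactly where $|B|\ge4$ and the zero-sum hypothesis are needed.

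One wording point: in the case $k\ge3$ with a line of multiplicity at least $2$, say explicitly that $s$ is taken from that line (relabel it $L_1$). For $k=3$, removing one term from each of the other two lines can leave a collinear remainder, e.g.\ $B=\{e_1,e_1,e_2,-2e_1-e_2\}$.
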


\begin{proof}
Choose independent occurrences $a,b\in B$, and put
$R=B\setminus\{a,b\}$.  If $R$ is mixed, take $(s,y)=(a,b)$.

Suppose that $R$ lies on a line $L$.  Since $B$ has sum zero,
$a+b=-\sum R\in L$.  Independence of $a,b$ implies that neither $a$ nor $b$
lies on $L$.  Choose $c\in R$.  Then $a,c$ are independent.  If $|B|\ge5$,
the complement $B\setminus\{a,c\}$ contains $b\notin L$ and at least one
nonzero term of $R\setminus\{c\}\subset L$, so it is mixed.  If $|B|=4$,
write $R=\{c,d\}$; then the complement $\{b,d\}$ is independent for the
same reason.  Thus $(s,y)=(a,c)$ works in both cases.

The contracted sequence has the same total sum as $B$.  It contains the
mixed complement $B\setminus\{s,y\}$, and $s+y\ne0$ because $s,y$ are
independent.
\end{proof}

\begin{theorem}[Order-value growth]\label{thm:growth}
Let $p$ be odd, and let $B$ be a mixed zero-sum sequence of $n\ge3$ nonzero
vectors in $\Fp^2$.  Then
\[
  |\Omega(B)|=|\mathcal W(B)|\ge \min(p,n-1).
\]
\end{theorem}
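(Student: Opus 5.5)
By Lemma~\ref{lem:affine} it suffices to bound $|\mathcal W(B)|$ from below. We argue by induction on $n$, following the contraction suggested by Lemma~\ref{lem:selection}.

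\textbf{Base case $n=3$.} A mixed zero-sum triple $u,v,w$ with $w=-u-v$ has $u,v$ independent. Using $\omega(u,u)=0$ and $w=-u-v$, we compute
\[
W(u,v,w)=\omega(u,v)+\omega(u,w)+\omega(v,w)=\omega(u,v),
\qquad
W(v,u,w)=-\omega(u,v).
\]
These differ because $\omega(u,v)\ne0$ and $p$ is odd. Hence $|\mathcal W(B)|\ge2=\min(p,n-1)$.

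\textbf{Inductive step, $n\ge4$.} Lemma~\ref{lem:selection} gives independent occurrences $s,y\in B$ and the mixed zero-sum sequence $B'$ of length $n-1$, in which the new term $z=s+y$ is nonzero. By induction, $|\mathcal W(B')|\ge\min(p,n-2)$. Every ordering $J$ of $B'$ lifts to two orderings of $B$: replace $z$ by $(s,y)$ or by $(y,s)$. Bilinearity gives
\[
W(J_{s,y})=W(J)+\omega(s,y),\qquad W(J_{y,s})=W(J)-\omega(s,y).
\]
So $\mathcal W(B)\supseteq \mathcal W(B')+\{\pm\omega(s,y)\}$, a sumset with a two-element set $\{\delta,-\delta\}$, where $\delta=\omega(s,y)\neq0$. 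Cauchy--Davenport then gives
\[
|\mathcal W(B)|\ge\min\bigl(p,\ |\mathcal W(B')|+1\bigr)\ge\min(p,n-1),
\]
which completes the induction. This is the ``replace the contracted vector in both orders'' step described in the abstract.

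\textbf{Points to verify.} The argument depends on two checks.
\begin{enumerate}
\item The splitting identity. Since $z$ contributes $\omega(x,z)$ or $\omega(z,x)$ with each other term $x$ according to position, and $\omega$ is bilinear, splitting $z$ into adjacent $s,y$ preserves every cross term. The only new contribution is the internal term $\pm\omega(s,y)$.
\item The application of Cauchy--Davenport. The lower bound $\min(p,|A|+|C|-1)$ holds for any nonempty $A,C\subseteq\Fp$, and here $|C|=2$.
\end{enumerate}
The hypothesis that $B'$ is mixed and zero-sum with nonzero terms is exactly what Lemma~\ref{lem:selection} supplies. The base case requires only that $n=3$ mixed triples have distinct reversal values.

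The main obstacle is therefore already handled by Lemma~\ref{lem:selection}: choosing the contraction so that $B'$ stays mixed. Without it, the induction could land on a collinear sequence, for which $\mathcal W$ is a singleton. Once that lemma is in place, the growth theorem follows from the identity above and Cauchy--Davenport.
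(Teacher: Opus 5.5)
Your proof is correct and follows the paper's argument essentially verbatim. Both use the $n=3$ base case with values $\pm\omega(u,v)$ and contract an independent pair via Lemma~\ref{lem:selection}. Both then split the contracted term into $(s,y)$ or $(y,s)$ to get $\mathcal W(B)\supseteq\mathcal W(B')+\{\pm\omega(s,y)\}$, apply Cauchy--Davenport, and use Lemma~\ref{lem:affine} to transfer the bound to $\Omega(B)$.
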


\begin{proof}
We induct on $n$.  For $n=3$, write
$B=\{x,y,-x-y\}$ with $\omega(x,y)=d\ne0$.  Its ordering values are
$\mathcal W(B)=\{d,-d\}$, so the assertion holds.

Let $n\ge4$.  Use Lemma~\ref{lem:selection} to choose $s,y$, put
$t=s+y$, and let
\[
  B'=(B\setminus\{s,y\})\sunion\{t\}.
\]
Set $d=\omega(s,y)\ne0$.  In any ordering of $B'$, replace the occurrence
of $t$ by the adjacent block $(s,y)$ or $(y,s)$.  By bilinearity, every
pairing with terms outside the block is unchanged; only the internal term is
new.  Therefore
\[
  \mathcal W(B)\supseteq \mathcal W(B')+\{d,-d\}.
\]
Since $p$ is odd, the second summand has two elements.  Cauchy--Davenport
\cite{Nathanson} and
the induction hypothesis give
\begin{align*}
 |\mathcal W(B)|
 &\ge \min\bigl(p,|\mathcal W(B')|+1\bigr)\\
 &\ge \min(p,n-1).
\end{align*}
Lemma~\ref{lem:affine} transfers the same bound to $\Omega(B)$.
\end{proof}

\begin{remark}
The bound is sharp.  For independent $u,v$, put $d=\omega(u,v)$ and let
$1\le m\le p-2$.  Directly sorting the positions of $v$ and $-mu-v$ among
the $m$ copies of $u$ gives
\[
 \mathcal W\bigl(u^m v(-mu-v)\bigr)
 =d\{-m,-m+2,\ldots,m\}.
\]
These $m+1$ values are distinct.  Thus this sequence has length $m+2$ and
exactly $m+1$ ordering values; at $m=p-2$ it has $p-1$ values.
\end{remark}

\section{A relative subsum theorem}

For a sequence $A$ in an abelian group, let $\Sigma_0(A)$ be the set of all
subsequence sums, including the empty sum.  A sequence is zero-sum-free if
no nonempty subsequence has sum zero.

\begin{theorem}[Relative subsums]\label{thm:RH}
Let $A=(a_1,\ldots,a_N)$ be a zero-sum-free sequence over $V=\Fp^2$, and let
$H\le V$ be a line.  Then
\[
  |\Sigma_0(A)\cap H|\ge N-p+2.
\]
No condition on $A\cap H$ is required.
\end{theorem}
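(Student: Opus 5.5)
We argue by contradiction with the polynomial method (Combinatorial Nullstellensatz on the cube $\{0,1\}^N$). Fix a linear functional $\varphi\colon V\to\Fp$ with $\ker\varphi=H$, and a second functional $\psi\colon V\to\Fp$ whose restriction to $H$ is injective (possible since $H$ is a line). A subsequence of $A$ indexed by $x\in\{0,1\}^N$ has sum $\sigma(x)=\sum_i x_ia_i$. This sum lies in $H$ exactly when $\varphi(\sigma(x))=\sum_i\varphi(a_i)x_i=0$. Within $H$, the value $\psi(\sigma(x))=\sum_i\psi(a_i)x_i$ determines the point.

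Suppose $m:=|\Sigma_0(A)\cap H|\le N-p+1$. Write $\Sigma_0(A)\cap H=\{0,h_1,\ldots,h_{m-1}\}$, where the $h_j$ are nonzero and distinct. Define
\[
 f(x)=\Bigl(1-\bigl(\textstyle\sum_i\varphi(a_i)x_i\bigr)^{p-1}\Bigr)\prod_{j=1}^{m-1}\Bigl(\textstyle\sum_i\psi(a_i)x_i-\psi(h_j)\Bigr).
\]
Its degree is at most $(p-1)+(m-1)\le N-1$. We evaluate $f$ on $\{0,1\}^N$:
\begin{itemize}
\item If $\sigma(x)\notin H$, the first factor vanishes by Fermat.
\item If $x\ne0$ and $\sigma(x)\in H$, then $\sigma(x)\ne0$ because $A$ is zero-sum-free. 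Hence $\sigma(x)=h_j$ for some $j\ge1$, and the product vanishes since $\psi$ is injective on $H$.
\item At $x=0$ we get $f(0)=\prod_j(-\psi(h_j))\ne0$.
\end{itemize}
This is the only place where zero-sum-freeness is used. No hypothesis on $A\cap H$ enters, which matches the remark in the statement.

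The contradiction comes from the standard cube lemma: a polynomial of degree $<N$ cannot be supported at a single point of $\{0,1\}^N$. To see this, reduce $f$ modulo $x_i^2-x_i$ to its multilinear form, which has the same values on the cube and degree $\le\deg f<N$. So the coefficient of $x_1\cdots x_N$ is $0$. By Möbius inversion on the cube, that coefficient equals $\sum_{x\in\{0,1\}^N}(-1)^{N-|x|}f(x)=(-1)^Nf(0)\ne0$. Hence $m\ge N-p+2$.

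The only step needing care is the construction of $f$: it must simultaneously detect membership in $H$ (via $\varphi^{p-1}$) and separate the points of $H$ (via $\psi$), within total degree $N-1$. The degree count is exactly where the bound $N-p+2$ comes from, so the proof is otherwise routine. If $N-p+2\le1$, the claim is trivial because $0\in\Sigma_0(A)$, so no case split is needed.
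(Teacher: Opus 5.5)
Your proof is correct and is essentially the paper's argument: your $f$ is exactly the paper's $F=\bigl(1-Q^{p-1}\bigr)P(L)$, with $\varphi,\psi$ playing the roles of $q,\ell$ and the values $\psi(h_j)$ forming its set $R$; the paper concludes $N\le p-1+|R|$ directly from the multilinear representation $P(0)\prod_i(1-X_i)$ rather than arguing by contradiction. One cosmetic correction: injectivity of $\psi$ on $H$ is what gives $f(0)\ne0$, whereas the vanishing at nonzero cube points needs only $\psi(\sigma(x))=\psi(h_j)$.
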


\begin{proof}
Choose linear forms $q,\ell:V\to\Fp$ such that
$\ker q=H$ and $\ell|_H:H\to\Fp$ is an isomorphism.  Then
$v\mapsto(\ell(v),q(v))$ is injective.  Give each occurrence $a_i$ its own
Boolean variable $X_i$, and put
\[
 Q(X)=\sum_iq(a_i)X_i,\qquad L(X)=\sum_i\ell(a_i)X_i.
\]
Define the set of distinct nonzero trace values
\[
 R=\{L(x):x\in\{0,1\}^N\setminus\{0\},\ Q(x)=0\}\subseteq\Fp^*.
\]
The inclusion in $\Fp^*$ follows from zero-sum-freeness and injectivity of
$(\ell,q)$.  Let
\[
 P(T)=\prod_{r\in R}(T-r),\qquad
 F(X)=\bigl(1-Q(X)^{p-1}\bigr)P(L(X)).
\]
On the Boolean cube, $F$ vanishes at every nonzero point: if $Q\ne0$, use
Fermat's theorem; if $Q=0$, then $L\in R$.  At the origin,
$F(0)=P(0)\ne0$.  Hence the unique multilinear representation of $F$ on the
Boolean cube is
\[
  P(0)\prod_{i=1}^N(1-X_i),
\]
which has degree exactly $N$.  Multilinear reduction modulo
$X_i^2-X_i$ never increases degree, whereas the displayed polynomial $F$
has degree at most $p-1+|R|$.  Consequently
\[
 N\le p-1+|R|.
\]
The empty sum supplies the additional value zero in $\Sigma_0(A)\cap H$, so
$|\Sigma_0(A)\cap H|=1+|R|\ge N-p+2$.
\end{proof}

\begin{corollary}\label{cor:D}
The Davenport constant of $\Fp^2$ is
\[
 D(\Fp^2)=2p-1.
\]
Moreover, every zero-sum-free sequence $A$ of length $2p-2$ is
subsum-complete: $\Sigma_0(A)=\Fp^2$.
\end{corollary}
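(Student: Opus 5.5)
The plan is to derive both assertions of Corollary~\ref{cor:D} from Theorem~\ref{thm:RH}, applied to each of the $p+1$ lines $H\le V$ in turn. The point is that these lines cover $V$ and pairwise meet only in $0$.

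\emph{Lower bound $D(\Fp^2)\ge 2p-1$.} I will take the standard example $e_1^{p-1}e_2^{p-1}$ with $e_1,e_2$ a basis. A subsum of this sequence is $ie_1+je_2$ with $0\le i,j\le p-1$, which vanishes only when $i=j=0$. So the sequence is zero-sum-free of length $2p-2$. This is the only step that does not use Theorem~\ref{thm:RH}.

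\emph{Subsum-completeness.} Let $A$ be zero-sum-free of length $N=2p-2$. For every line $H$, Theorem~\ref{thm:RH} gives
\[
|\Sigma_0(A)\cap H|\ge (2p-2)-p+2=p=|H|,
\]
so $H\subseteq\Sigma_0(A)$. Since every vector of $V$ lies on some line through the origin, $\Sigma_0(A)=\Fp^2$.

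\emph{Upper bound $D(\Fp^2)\le 2p-1$.} I will show directly that no zero-sum-free sequence has length $N\ge 2p-1$. Suppose $A$ is one, fix any line $H$, and apply Theorem~\ref{thm:RH}:
\[
|\Sigma_0(A)\cap H|\ge N-p+2\ge p+1>|H|,
\]
which is impossible. Hence every zero-sum-free sequence has length at most $2p-2$, so every sequence of length $2p-1$ contains a nonempty zero-sum subsequence. (Alternatively, one could apply the completeness statement to a zero-sum-free subsequence of length $2p-2$: the remaining term $g$ has $-g$ realized as a subsum, giving a zero sum. The direct count is simpler.) Combining the two bounds gives $D(\Fp^2)=2p-1$.

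There is no real obstacle once Theorem~\ref{thm:RH} is available. The only points to check are that the lines cover $V$ and that each has exactly $p$ elements. I will also note that Theorem~\ref{thm:RH} places no hypothesis on $A\cap H$, so it may be applied to every line, including lines containing many terms of $A$.
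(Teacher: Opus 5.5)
Your proof is correct. The lower bound and the upper bound match the paper's argument exactly: the example $e_1^{p-1}e_2^{p-1}$, and Theorem~\ref{thm:RH} forcing $p+1$ values into a $p$-element line.

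The subsum-completeness step takes a different route.
\begin{itemize}
\item The paper appends $-g$ to $A$ to get a sequence of length $D(\Fp^2)=2p-1$. Its nonempty zero-sum subsequence must use $-g$, so the remaining terms sum to $g$.
\item You apply Theorem~\ref{thm:RH} with $N=2p-2$ to every line and use that the $p+1$ lines cover $V$.
\end{itemize}

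Both are one-line deductions. The paper's version needs only the numerical value of $D(\Fp^2)$. It therefore holds verbatim for zero-sum-free sequences of length $D(G)-1$ in any finite abelian group $G$. It also stays valid if one simply cites Olson's theorem instead of Theorem~\ref{thm:RH}.

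Your version stays inside the Boolean-cube argument. It also shows that the bound of Theorem~\ref{thm:RH} is attained with equality on every line at length $2p-2$, since $|\Sigma_0(A)\cap H|=p=N-p+2$.

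Your parenthetical alternative for the upper bound is essentially the paper's completeness argument run in reverse.
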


\begin{proof}
The first assertion is Olson's classical theorem
\cite{OlsonI,OlsonII}; we include the short deduction because it follows
directly from Theorem~\ref{thm:RH}.
If $A$ were zero-sum-free of length $2p-1$, Theorem~\ref{thm:RH} would give
$|\Sigma_0(A)\cap H|\ge p+1$ inside the $p$-element line $H$.  The sequence
$e_1^{p-1}e_2^{p-1}$ gives the matching lower bound.

Now let $|A|=2p-2$ and $g\ne0$.  The sequence
$A\sunion\{-g\}$ has length $D(\Fp^2)$ and hence a
nonempty zero-sum subsequence.  It must contain $-g$, because $A$ is
zero-sum-free.  Its remaining terms sum to $g$.
\end{proof}

We also need the equality case of the elementary one-dimensional subsum
bound.

\begin{lemma}[Cyclic equality case]\label{lem:cyclic}
Let $X$ be a zero-sum-free sequence of $h\le p-1$ nonzero elements of
$\Fp$.  Then $|\Sigma_0(X)|\ge h+1$.  If equality holds, all terms of $X$
are equal.
\end{lemma}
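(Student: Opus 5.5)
We argue by induction on $h$, tracking the chain of partial-sum sets $S_k=\Sigma_0(x_1,\ldots,x_k)$ for a suitable ordering of $X=(x_1,\ldots,x_h)$. We have $S_0=\{0\}$ and $S_k=S_{k-1}\cup(S_{k-1}+x_k)$. The key claim is that each step enlarges the set: $|S_k|\ge|S_{k-1}|+1$.

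\emph{Strict growth.} Suppose $S_{k-1}+x_k\subseteq S_{k-1}$. Since $x_k\ne0$ has additive order $p$, the set $S_{k-1}$ would be invariant under translation by $\langle x_k\rangle=\Fp$, hence equal to all of $\Fp$. Then $-x_k\in S_{k-1}$, so $-x_k$ is a sum of some subsequence of $x_1,\ldots,x_{k-1}$, and adjoining $x_k$ gives a nonempty zero-sum subsequence. This contradicts zero-sum-freeness. Therefore $|S_h|\ge h+1$. The hypothesis $h\le p-1$ only ensures that the bound $h+1\le p$ is consistent.

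\emph{Equality case.} If $|\Sigma_0(X)|=h+1$, then every step adds exactly one element, and this holds for every ordering of $X$. Suppose two terms differ, say $x\ne y$. Order $X$ so that $x,y$ come first. Then $S_2=\{0,x,y,x+y\}$. These four elements are distinct: $x+y\ne0$ by zero-sum-freeness, and $x,y\ne0$ with $x\ne y$. So the second step adds two elements, which breaks the forced one-per-step growth. Hence all terms are equal. The case $h=1$ is trivial.

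The main obstacle is making the \emph{every step adds exactly one} reduction rigorous. Equality of the total only forces each increment to be exactly $1$ because each increment is already at least $1$ by the strict-growth argument, which applies to every prefix of every ordering. With that in hand, the rest is immediate.
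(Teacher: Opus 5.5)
Your proof is correct, but it takes a different route from the paper in both halves.

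For the inequality, the paper iterates Cauchy--Davenport on $\{0\}+\{0,x_1\}+\cdots+\{0,x_h\}$; this is where $h\le p-1$ is used, to keep the $\min(p,\cdot)$ from truncating. You instead prove each increment directly from zero-sum-freeness. If $S_{k-1}+x_k\subseteq S_{k-1}$, then $S_{k-1}$ would be all of $\Fp$ and would contain $-x_k$. This makes the argument self-contained, and it shows that the length bound is automatic rather than a genuine hypothesis.

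For the equality case, the paper shows that the $h+1$ prefix sums of any ordering are distinct, again by zero-sum-freeness, and hence exhaust $\Sigma_0(X)$. An adjacent transposition changes only one prefix sum, so that sum must be unchanged, which forces the two swapped terms to be equal. You instead put two distinct terms first and note that $\{0,x,y,x+y\}$ has four elements. Each later step adds at least one element, so $|\Sigma_0(X)|\ge h+2$. Your version is a single counting step and needs no swapping argument. The paper's version additionally shows that, at equality, the prefix sums of every ordering enumerate $\Sigma_0(X)$ exactly.

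A small point: you announce an induction on $h$, but what you actually give is a direct chain count over prefixes.
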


\begin{proof}
The inequality follows by iterating Cauchy--Davenport on
$\{0\}+\{0,x_1\}+\cdots+\{0,x_h\}$.

Assume equality.  In any ordering of $X$, the $h+1$ prefix sums are distinct
and therefore exhaust $\Sigma_0(X)$.  Swap two adjacent terms.  All prefix
sums except the one between those two terms stay fixed.  The new prefix sums
are again distinct and exhaust the same set.  Thus the changed prefix sum
must equal the old one, and the two adjacent terms are equal.  Adjacent swaps
show that all terms are equal.
\end{proof}

\begin{lemma}[Representation rigidity]\label{lem:rigidity}
Let $A$ be a zero-sum-free sequence of length $2p-2$ over $\Fp^2$, and let
$u\ne0$.  Suppose every nonempty subsequence of $A$ with sum $u$ is a
singleton.  Then $u$ occurs exactly $p-1$ times in $A$.
\end{lemma}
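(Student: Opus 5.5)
Write $m\ge0$ for the multiplicity of $u$ in $A$, put $A=u^m\sunion A_0$ with $u\notin A_0$, and let $H=\Fp u$ be the line through $u$. Since $A$ is zero-sum-free, $m\le p-1$, so the task is to exclude $m\le p-2$. The plan is to squeeze $\Sigma_0(A_0)\cap H$ between two bounds: Theorem~\ref{thm:RH} from below, and the singleton hypothesis from above. This should force the exact shape of this set, and that shape then yields a zero-sum.

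\emph{Case $m=0$.} By Corollary~\ref{cor:D}, $A$ is subsum-complete, so some nonempty subsequence has sum $u$. By hypothesis it is a singleton, so $u\in A$ and $m\ge1$. Thus this case cannot occur, and from now on $m\ge1$.

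\emph{Upper bound.} Identify $H$ with $\Fp$ via $ju\leftrightarrow j$. Let $T\subseteq A_0$ be nonempty with sum $ju\in H$. Then $j\ne0$, since $A$ is zero-sum-free. If $j\in\{1-m,\dots,1\}$ (mod $p$), adjoin $1-j\in\{0,\dots,m\}$ copies of $u$. The result is a subsequence of $A$ with sum $u$ that is not a singleton: either it has at least two terms, or it equals $T$ itself, which is a single term different from $u$. This contradicts the hypothesis. Since $m\le p-2$, the excluded residues $\{1-m,\dots,0,1\}$ are $m+1$ distinct classes. Hence
\[
  \Sigma_0(A_0)\cap H\subseteq\{0\}\cup\{2,3,\dots,p-m\},
\]
a set of size $p-m$.

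\emph{Lower bound.} $A_0$ is zero-sum-free of length $N=2p-2-m$. Theorem~\ref{thm:RH} therefore gives $|\Sigma_0(A_0)\cap H|\ge N-p+2=p-m$. Combining the two bounds forces equality:
\[
  \Sigma_0(A_0)\cap H=\{0,2,3,\dots,p-m\}.
\]

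\emph{Conclusion.} When $m\le p-2$ we have $p-m\ge2$, so $(p-m)u$ is a genuine nonzero subsum: some nonempty $T\subseteq A_0$ has sum $(p-m)u$. Adjoining all $m$ copies of $u$ gives a nonempty subsequence of $A$ with sum $pu=0$. This contradicts zero-sum-freeness, so $m=p-1$.

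The main obstacle is the bookkeeping in the upper bound: each forbidden residue must be checked to be genuinely excluded, including the borderline $j=1$ (where $T$ itself has sum $u$) and $j=0$ (excluded by zero-sum-freeness). One must also verify that the count $m+1$ of excluded residues does not collapse modulo $p$, which uses $m\le p-2$. Once this count is exact, the equality from Theorem~\ref{thm:RH} does the rest.
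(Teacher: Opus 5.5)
Your proof is correct, and it follows a genuinely different, somewhat leaner route than the paper.

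The paper splits $A$ geometrically into $S=A\cap H$ and the off-line part $C$. It applies Theorem~\ref{thm:RH} only to $C$ and bounds $|\Sigma_0(S)|$ from below by Cauchy--Davenport. Disjointness of $(\Sigma_0(C)\cap H)\setminus\{0\}$ and $-\Sigma_0(S)$ then forces equality in both bounds. It next needs the equality case, Lemma~\ref{lem:cyclic}, to get $S=u^h$, and only then does the representation hypothesis finish the argument.

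You instead split off only the copies of $u$. Every other term, including possible further points of $H$ such as $2u$, stays in $A_0$. This is legitimate precisely because Theorem~\ref{thm:RH} imposes no condition on $A_0\cap H$. It lets the hypothesis and zero-sum-freeness act directly as an upper bound on $\Sigma_0(A_0)\cap H$, so you need neither Cauchy--Davenport nor the cyclic equality case. What the paper's route buys in exchange is structural information that this lemma does not use: $S=u^h$ and $(\Sigma_0(C)\cap H)\setminus\{0\}=H\setminus(-\Sigma_0(S))$.

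Your argument also compresses further. For $-m\le j\le 0$, a nonempty $T\subseteq A_0$ with sum $ju$, together with $-j$ copies of $u$, is already a nonempty zero-sum. So zero-sum-freeness alone excludes the residues $-m,\dots,0$. The hypothesis is needed only for $j=1$, i.e.\ to know $u\notin\Sigma_0(A_0)$. These are $m+2\le p$ distinct classes, so $|\Sigma_0(A_0)\cap H|\le p-m-1<p-m$ at once, uniformly for $0\le m\le p-2$. This removes the separate case $m=0$ (and with it the appeal to Corollary~\ref{cor:D}), as well as the equality-then-contradiction step.

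One phrasing slip: when $j=1$ and $|T|=1$, $T$ is not ``a single term different from $u$'' with sum $u$. That case is simply impossible, because a one-term subsequence with sum $u$ is $u$ itself and $u\notin A_0$.
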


\begin{proof}
By Corollary~\ref{cor:D}, $u$ is represented, so $u$ occurs in $A$.  Set
$H=\langle u\rangle$, let $S=A\cap H$, $h=|S|$, and $C=A\setminus S$.
Since $S$ is zero-sum-free in $H\cong C_p$, one has $h\le p-1$.
Let
\[
 R=\bigl(\Sigma_0(C)\cap H\bigr)\setminus\{0\}.
\]
Theorem~\ref{thm:RH} gives
\[
 |R|\ge |C|-p+1=p-1-h.
\]
Zero-sum-freeness makes $R$ and $-\Sigma_0(S)$ disjoint.  On the other hand,
Lemma~\ref{lem:cyclic} gives $|\Sigma_0(S)|\ge h+1$.  Their lower bounds add
to $p=|H|$, so equality holds throughout and
\[
 R=H\setminus\bigl(-\Sigma_0(S)\bigr),
 \qquad |\Sigma_0(S)|=h+1.
\]
By Lemma~\ref{lem:cyclic} and the occurrence of $u$, one has $S=u^h$.

If $h\le p-2$, the representation hypothesis implies $u\notin R$: no term
of $C$ equals $u$, so a $C$-representation could not be a singleton.  Hence
$u\in-\Sigma_0(S)=\{-ju:0\le j\le h\}$.  This forces
$j\equiv-1\pmod p$, impossible for $j\le p-2$.  Thus $h=p-1$.
\end{proof}

\section{A uniform quotient theorem}

\begin{theorem}[Uniform block theorem]\label{thm:block}
Let $p$ be odd and $1\le k\le p$.  Let $Q$ be a sequence over
$\Fp^2\setminus\{0\}$ such that
\[
 |Q|=2p+k-2,
 \qquad
 |Q\cap L|\le p+k-2
 \quad\text{for every line }L.
\]
Then $Q$ contains a mixed zero-sum block $U$ satisfying
\[
  |\Omega(U)|\ge k.
\]
\end{theorem}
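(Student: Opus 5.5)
By Theorem~\ref{thm:growth}, a mixed zero-sum block $U$ of length $|U|\ge k+1$ has $|\Omega(U)|\ge\min(p,|U|-1)\ge k$, since $k\le p$. So the goal reduces to a purely additive statement: \emph{$Q$ contains a mixed zero-sum subsequence of length at least $k+1$.} I would prove this by splitting $Q$ into a long zero-sum-free part $A$ and a short remainder $T$. Then I would close $T$ up to zero using subsums of $A$, via the subsum-completeness of Corollary~\ref{cor:D}.

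\emph{Step 1 (the main case).} Suppose $Q$ contains a zero-sum-free subsequence $A$ of length $2p-2$, and let $T=Q\setminus A$, so $|T|=k$. If $\sigma(T)\neq 0$, Corollary~\ref{cor:D} gives a nonempty $A'\subseteq A$ with $\sigma(A')=-\sigma(T)$. Then $U=T\sunion A'$ is a zero-sum block of length $\ge k+1$. Two things can go wrong: $\sigma(T)=0$, or $U$ is collinear. In either case I would perturb the choice. Swap one term $t\in T$ with a term $a\in A$ for which $(A\setminus\{a\})\sunion\{t\}$ is still zero-sum-free, or close a $(k-1)$-subset of $T$ together with one element of $A$. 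The line condition $|Q\cap L|\le p+k-2$ is what excludes the collinear outcome. A collinear $U\subseteq L$ with $T\subseteq L$ would have to use up $A\cap L$. When every representation is forced into $L$, the rigidity Lemma~\ref{lem:rigidity} (applied to $u=-\sigma(T)$ when all representations are singletons) forces $p-1$ copies of $u$ in $A$. Together with the $k$ terms of $T$ on $L$, this puts $\ge p+k-1$ terms on $L$, a contradiction. Theorem~\ref{thm:RH} applied to $H=L$ controls the remaining situations, where the representations of $-\sigma(T)$ all lie inside $L$ but are not singletons.

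\emph{Step 2 (no long zero-sum-free subsequence).} If every zero-sum-free subsequence of $Q$ has length $<2p-2$, I would extract zero-sum blocks greedily using $D(\Fp^2)=2p-1$. Whenever at least $2p-1$ terms remain, a block can be removed. Blocks of length $\ge k+1$ that are mixed finish the proof. Collinear blocks lie on lines and are bounded by the line condition. Short mixed blocks can be concatenated, since a union of zero-sum blocks is zero-sum, and a union containing a mixed block is mixed. Concatenating short blocks until the total length first reaches $k+1$ gives the desired $U$, provided enough mass is extracted. The count $|Q|=2p+k-2$ guarantees at least $k$ terms are consumed by blocks. The one delicate point is when everything consumed lies in collinear blocks on a single line $L$. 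In that case the line condition leaves at least $p$ terms off $L$, and I would rerun Step~1 on a suitable sub-configuration.

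I expect the main obstacle to be the degenerate cases in Step~1. These are $\sigma(T)=0$, and the case where all representations of $-\sigma(T)$ lie on a common line with $T$. The plan there is to make the exchange argument precise, trading a term of $T$ against a term of $A$. Lemma~\ref{lem:rigidity} and Theorem~\ref{thm:RH} should then be used to show that failure of every exchange forces more than $p+k-2$ terms of $Q$ onto one line.
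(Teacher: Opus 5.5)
Your reduction is correct, and it matches what the paper's argument actually establishes: by Theorem~\ref{thm:growth} it suffices to find a mixed zero-sum block of length at least $k+1$. The case $\sigma(T)\ne0$ of Step~1 can also be finished with Theorem~\ref{thm:RH}, as you suggest.

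The genuine gap is the case you yourself flag, $\sigma(T)=0$, above all when $T$ is a mixed zero-sum block. This is not a side case: it is exactly the configuration to which the paper's proof reduces. Your proposed exchanges are not justified.
\begin{itemize}
\item A swap $t\leftrightarrow a$ keeping $(A\setminus\{a\})\sunion\{t\}$ zero-sum-free requires a value $a\ne t$ that every subsequence of $A$ with sum $-t$ uses with its full multiplicity in $A$. Nothing in your argument produces such an $a$.
\item Closing $T\setminus\{t\}$ with a single term of $A$ only gives length $k$.
\end{itemize}
The missing idea is to replace one term $x$ of $T$ by an arbitrary subsequence $T_x\subseteq A$ with sum $x$, which exists by Corollary~\ref{cor:D}. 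The block $(T\setminus\{x\})\sunion T_x$ is zero-sum. It is automatically mixed, because if $T\setminus\{x\}$ lay on a line, its sum $-x$, and hence all of $T$, would lie on that line. So this block has length at least $k+1$ unless $|T_x|=1$. If every representation of every $x\in\supp(T)$ is a singleton, apply Lemma~\ref{lem:rigidity} to each such $x$. (Apply it to these $x$, not to $-\sigma(T)$ as you do; there a singleton representation is harmless for the length.) This gives $\mathsf v_x(A)=p-1$ for each $x$, so $T=u^av^b$ with $u,v$ independent and $au+bv=0$. That forces $p\mid a$ and $p\mid b$, contradicting $|T|=k\le p$.

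Step~2 has a similar hole. The configuration where all extracted blocks lie on one line is left to an unspecified ``rerun of Step~1''. The paper avoids this, and your whole Step~1/Step~2 dichotomy, by first proving that $Q$ contains some mixed zero-sum block:
\begin{itemize}
\item Suppose not, and fix a zero-sum block on a line $L$. Then $Q\setminus(Q\cap L)$ is zero-sum-free.
\item Theorem~\ref{thm:RH} gives it at least $p+k-1-|Q\cap L|\ge1$ nonzero $L$-valued subsums; the line bound is used exactly here.
\item Cauchy--Davenport on $Q\cap L$ supplies a subsum cancelling one of them, which yields a mixed zero-sum block after all.
\end{itemize}
Start the greedy extraction with this mixed block. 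The union $U$ of the removed blocks is then mixed with $|U|\ge k$. If $|U|=k$, the residual is zero-sum-free of length $2p-2$ and $U$ is a mixed zero-sum block of length $k$, which is precisely the case treated above. With this ordering, no analysis of collinear closures is needed at all.
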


\begin{proof}
We first prove that $Q$ contains a mixed zero-sum block.  Since
$|Q|\ge2p-1=D(\Fp^2)$, it has a nonempty zero-sum block.  If none is mixed,
choose such a block on a line $L$.  Put $h=|Q\cap L|$ and
$A=Q\setminus(Q\cap L)$.  The sequence $A$ is zero-sum-free: a zero-sum
block in $A$ would either be mixed, or lie on a second line and combine with
the chosen $L$-block to form a mixed zero-sum block.

By Theorem~\ref{thm:RH}, the number of nonzero $L$-valued subsums of $A$ is
at least
\[
 |A|-p+1=p+k-1-h.
\]
The nonzero sequence $Q\cap L$ has at least $\min(h,p-1)$ distinct nonzero
subsum values, by Cauchy--Davenport.  If $h\le p-1$, the two bounds add to
$p+k-1>p-1$; if $h\ge p-1$, the line subsums already contain every nonzero
value of $L$.  In either case, a nonzero trace of $A$ is cancelled by a
nonempty subsum of $Q\cap L$.  Their union is a mixed zero-sum block, a
contradiction.

Start with one mixed zero-sum block and repeatedly remove any further
zero-sum block from the remaining sequence until the residual sequence $A$
is zero-sum-free.  Let $U$ be the union of all removed blocks.  Then $U$ is
itself a mixed zero-sum block.  Since $|A|\le2p-2$,
\[
 |U|=|Q|-|A|\ge k.
\]
If $|U|\ge k+1$, Theorem~\ref{thm:growth} gives $|\Omega(U)|\ge k$, and we
are done.  Thus it remains to consider $|U|=k$; in particular,
$|A|=2p-2$.  The cases $k\le2$ cannot occur because a mixed zero-sum block
has length at least three.

Assume for a contradiction that $Q$ contains no mixed zero-sum block $E$
with $|\Omega(E)|\ge k$.  In particular $|\Omega(U)|\le k-1$, and the
growth theorem then gives equality $|\Omega(U)|=k-1$.
By Corollary~\ref{cor:D}, $A$ is
subsum-complete.  Fix an occurrence $x\in U$ and any subsequence $T\subseteq A$
with sum $x$.  The sequence
\[
 E=(U\setminus\{x\})\sunion T
\]
is zero-sum and mixed.  Indeed, $U\setminus\{x\}$ cannot be collinear: if it
lay on a line $L$, then its sum $-x$ would put $x$ on $L$ as well.
If $|T|\ge2$, then $|E|\ge k+1$, and Theorem~\ref{thm:growth} gives
$|\Omega(E)|\ge k$, contradicting the assumption.  Therefore every
representation of every support value $x$ of $U$ by a subsequence of $A$ is
a singleton.

Lemma~\ref{lem:rigidity} now gives
\[
  \mathsf v_x(A)=p-1
  \qquad\text{for every }x\in\supp(U).
\]
As $|A|=2p-2$, the support of $U$ has at most two elements.  Since $U$ is
mixed, it has exactly two independent support values, say
$U=u^a v^b$ with $a,b>0$.  The equation $au+bv=0$ forces
$p\mid a$ and $p\mid b$, hence $|U|\ge2p$.  This contradicts
$|U|=k\le p$.
\end{proof}

\section{The Heisenberg group}

Let $\pi(M(a,b,c))=(a,b)$.  The center is
$Z(H_{p^3})=\{M(0,0,c):c\in\Fp\}\cong C_p$.

\begin{lemma}[Product-one criterion]\label{lem:criterion}
For an ordered list $I=(g_1,\ldots,g_m)$ with
$g_i=M(a_i,b_i,c_i)$,
\[
 \prod_{i=1}^m g_i
 =M\left(\sum_i a_i,\sum_i b_i,
          \sum_i c_i+\sum_{i<j}a_i b_j\right).
\]
Thus a block has a product-one ordering precisely when its projection sums
to zero and the negative sum of its individual central coordinates belongs
to its cross-sum set $\Omega$.
\end{lemma}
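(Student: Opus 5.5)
The plan is to prove the product formula by induction on $m$, and then read off the criterion by comparing coordinates with the identity $M(0,0,0)$.

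For $m=1$ the formula is trivial. Assuming it for $(g_1,\ldots,g_{m-1})$, the partial product is
\[
M\Bigl(\textstyle\sum_{i<m}a_i,\ \sum_{i<m}b_i,\ \sum_{i<m}c_i+\sum_{i<j<m}a_ib_j\Bigr).
\]
We multiply on the right by $g_m=M(a_m,b_m,c_m)$ using the multiplication rule $M(a,b,c)M(a',b',c')=M(a+a',b+b',c+c'+ab')$. The first two coordinates simply add. The new third-coordinate contribution is $c_m+\bigl(\sum_{i<m}a_i\bigr)b_m$, which is exactly the set of missing terms $a_ib_m$ with $i<m$. This completes the induction. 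The only point that needs care is that the left factor's first coordinate multiplies the right factor's second coordinate, so the cross term is $\sum_{i<j}a_ib_j$ and not its transpose.

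For the criterion, an ordering $I$ of a block gives product one exactly when all three coordinates of the product vanish.
\begin{itemize}
\item The first two coordinates vanish if and only if $\sum_i\pi(g_i)=0$. This condition does not depend on the ordering.
\item Given that, the third coordinate vanishes if and only if $q(\pi(I))=-\sum_i c_i$, where $q$ is the cross sum from Section~2 applied to the projected ordering, since $q(\pi(I))=\sum_{i<j}a_ib_j$.
\end{itemize}
Reorderings of the block correspond exactly to orderings of the projected sequence. Hence the block has a product-one ordering if and only if its projection is zero-sum and $-\sum c_i\in\Omega(\pi(\text{block}))$.

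There is no real obstacle here. The one subtlety is that repeated group elements may have equal projections, so $\Omega$ must be taken over orderings of the projected sequence of occurrences. This is consistent with how $\Omega(B)$ was defined for sequences.
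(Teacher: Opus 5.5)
Your proposal is correct and follows the paper's own proof: induction on $m$ via the multiplication law, then reading off the criterion by comparing coordinates with the identity. Your remark that orderings of the block correspond to orderings of the projected sequence, so that $\Omega$ is taken over the projected occurrences, makes explicit a point the paper leaves implicit.
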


\begin{proof}
Induction on $m$ using the multiplication law.
\end{proof}

\begin{theorem}\label{thm:main}
For every odd prime $p$,
\[
  \boxed{\mathsf d(H_{p^3})=3p-3.}
\]
\end{theorem}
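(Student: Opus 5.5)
The proof has two halves: a lower bound from an explicit product-one-free sequence of length $3p-4$, and an upper bound showing that every sequence of length $3p-3$ contains a product-one subsequence.

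\textbf{Lower bound.} I will take $x=M(1,0,0)$, $y=M(0,1,0)$, $v=M(0,0,1)$ and consider the sequence $x^{p-1}y^{p-1}v^{p-1}$, which has length $3p-3$ and is the standard example of a product-one-free sequence. Suppose a nonempty subsequence has a product-one ordering. By Lemma~\ref{lem:criterion}, its projection under $\pi$ sums to zero. Since it contains at most $p-1$ copies of $x$ and of $y$, it contains no copy of either. So it consists only of central elements $v^j$ with $1\le j\le p-1$, whose product is $M(0,0,j)\ne 1$. This contradiction gives $\mathsf d(H_{p^3})\ge 3p-3$.

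\textbf{Upper bound.} Let $S$ be a sequence of length $3p-2$. I will split it as $S=S_Z\sunion S'$, where $S_Z$ consists of the central terms ($\pi(g)=0$) and $S'$ of the rest. Put $Q=\pi(S')$, a sequence over $\Fp^2\setminus\{0\}$, and $z=|S_Z|$.
\begin{itemize}
\item If $z\ge p$, then $S_Z\cong$ a sequence in $C_p$ of length at least $p$ has a zero-sum, i.e.\ a product-one, subsequence, and we are done.
\item So assume $z\le p-1$, and write $|Q|=3p-2-z=2p+k-2$ with $k=p-z\in[1,p]$.
\end{itemize}
In this second case I want to apply Theorem~\ref{thm:block}, which also needs the line condition $|Q\cap L|\le p+k-2$. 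If that condition fails, then some line $L$ has $|Q\cap L|\ge p+k-1=2p-1-z$.

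I plan to handle this collinear case separately. On the preimage $\pi^{-1}(L)$, the group is abelian: for vectors on a line $\omega=0$, and the elements commute. Hence $\pi^{-1}(L)\cong C_p^2$. The terms of $S$ lying over $L$ number at least $(2p-1-z)+z=2p-1=D(C_p^2)$. By Corollary~\ref{cor:D}, they therefore contain a zero-sum subsequence, which is product-one. I should double-check that $\pi^{-1}(L)$ really is elementary abelian of rank $2$, generated by a lift of a generator of $L$ together with the center. This holds because the group has exponent $p$.

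\textbf{Main step.} Otherwise Theorem~\ref{thm:block} supplies a mixed zero-sum block $U\subseteq Q$ with $|\Omega(U)|\ge k=p-z$. Let $U^*\subseteq S'$ be the corresponding lifted block, and let $c$ be the sum of its central coordinates. By Lemma~\ref{lem:criterion}, a subblock $U^*\sunion W$ with $W\subseteq S_Z$ is product-one iff
\[
-c-\sigma(W)\in\Omega(U).
\]
This amounts to asking whether $-c\in\Omega(U)+\Sigma_0(S_Z)$. The nonzero elements of $S_Z$ have at least $z+1$ subsum values: this follows from Lemma~\ref{lem:cyclic}, or simply from Cauchy--Davenport applied to $\{0,c_1\}+\cdots+\{0,c_z\}$. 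Then Cauchy--Davenport gives
\[
|\Omega(U)+\Sigma_0(S_Z)|\ge\min\bigl(p,(p-z)+(z+1)-1\bigr)=p,
\]
so the sumset is all of $\Fp$ and the required product-one subsequence exists. This yields $\mathsf d\le 3p-3$.

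The conceptual work sits in Theorem~\ref{thm:block}. Within this proof, the step needing care is the collinear case: checking the abelian structure of $\pi^{-1}(L)$, and the exact counting that makes the line bound match $D(\Fp^2)$.
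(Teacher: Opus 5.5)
Your proof is correct and follows the paper's argument step for step. It uses the same lower-bound sequence, the same reduction to $z\le p-1$ and $k=p-z$, the same use of $\pi^{-1}(L)\cong C_p^2$ with $D(\Fp^2)=2p-1$ to obtain the line condition, and the same final Cauchy--Davenport count $\min(p,k+z)=p$. The only difference is that you phrase the line step as a direct case split, where the paper argues by contradiction from product-one-freeness.

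Two cosmetic slips, neither a gap:
\begin{itemize}
\item Your opening sentence has the lengths off by one. The example has length $3p-3$, and the upper bound concerns sequences of length $3p-2$, which is what your body correctly uses.
\item The bound $|\Sigma_0(S_Z)|\ge z+1$ needs every central coordinate to be nonzero. You may assume this, since an identity term is already a product-one subsequence.
\end{itemize}
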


\begin{proof}
For the lower bound, let
\[
 x=M(1,0,0),\qquad y=M(0,1,0),\qquad v=M(0,0,1).
\]
The sequence $x^{p-1}y^{p-1}v^{p-1}$ is product-one-free.  Indeed, a
product-one subsequence using $a$ copies of $x$ and $b$ copies of $y$ must
have $a\equiv b\equiv0\pmod p$.  Since $0\le a,b\le p-1$, this gives
$a=b=0$.  The remaining central subsequence $v^c$ is the identity only when
$c=0$.  Hence $\mathsf d(H_{p^3})\ge3p-3$.

For the upper bound, suppose that a product-one-free sequence $S$ of length
$3p-2$ exists.  Let $z$ be the number of central terms, and let $Q$ be the
sequence of nonzero projections of the remaining terms.  The central part is
zero-sum-free in $C_p$, so $z\le p-1$.  Set
\[
  k=p-z,
  \qquad
  |Q|=3p-2-z=2p+k-2.
\]

For every line $L\le\Fp^2$, the inverse image $\pi^{-1}(L)$ is abelian because
commutators are determinants of projected vectors; it has order $p^2$ and
exponent $p$, hence isomorphic to $C_p^2$.  It
contains all $z$ central terms and all terms whose projections lie on $L$.
Corollary~\ref{cor:D} and product-one-freeness therefore imply
\[
  z+|Q\cap L|\le2p-2,
  \qquad
  |Q\cap L|\le p+k-2.
\]
Theorem~\ref{thm:block} supplies a mixed zero-sum block $U\subseteq Q$ with
$|\Omega(U)|\ge k$.

Choose the corresponding noncentral group terms.  Their fixed individual
central coordinates translate $\Omega(U)$ but do not change its cardinality.
The central part of $S$ has a subset-sum set of size at least $z+1$, by the
one-dimensional Cauchy--Davenport bound.  A final application of
Cauchy--Davenport gives
\[
 \left|\Omega(U)+\Sigma_0(S\cap Z(H_{p^3}))\right|
 \ge \min(p,k+z)=p.
\]
After including the fixed central-coordinate translation, some ordering of
the chosen noncentral block and some subset of the central terms have total
central coordinate zero.  Their projection sum is already zero, so
Lemma~\ref{lem:criterion} gives a nonempty product-one subsequence.  This
contradicts the choice of $S$ and proves the upper bound.
\end{proof}

\section{Concluding remarks}

The proof is entirely theoretical.  Finite computations were used only as
adversarial consistency checks during discovery; no computation is part of
the logical argument.  The proof itself uses only Cauchy--Davenport,
elementary linear algebra, and the Boolean-cube degree argument given above.

\paragraph{AI-assisted tools disclosure.}
AI-assisted tools were used during exploration, drafting, and adversarial
consistency checking.  The mathematical argument presented here is
self-contained; no output of an AI system is invoked as an external authority.


\begin{thebibliography}{9}

\bibitem{GodaraSarkar}
N.~K. Godara and S. Sarkar,
\emph{A note on a conjecture of Gao and Zhuang for groups of order $27$},
J. Algebra Appl. 24 (2025), no. 11, Paper 2550268;
\href{https://doi.org/10.1142/S0219498825502688}{doi:10.1142/S0219498825502688};
\href{https://arxiv.org/abs/2311.02387}{arXiv:2311.02387}.

\bibitem{White}
P. White,
\emph{The small Davenport constant of the Heisenberg group of order $125$},
\href{https://arxiv.org/abs/2607.14379}{arXiv:2607.14379} (2026).

\bibitem{Volkmann}
A. Volkmann,
\emph{The small Davenport constant of the Heisenberg group of order $343$},
\href{https://arxiv.org/abs/2608.13747}{arXiv:2608.13747} (2026).

\bibitem{OlsonI}
J.~E. Olson,
\emph{A combinatorial problem on finite Abelian groups. I},
J. Number Theory 1 (1969), 8--10.

\bibitem{OlsonII}
J.~E. Olson,
\emph{A combinatorial problem on finite Abelian groups. II},
J. Number Theory 1 (1969), 195--199.

\bibitem{Nathanson}
M.~B. Nathanson,
\emph{Additive Number Theory: Inverse Problems and the Geometry of Sumsets},
Graduate Texts in Mathematics 165, Springer, 1996.

\end{thebibliography}
\end{document}